%\documentclass[review, 5p, authoryear]{autart}
%\documentclass[twocolumn]{autart}
%review
%elsarticle
%twocolumn

\documentclass[letterpaper, 10 pt, conference]{ieeeconf}
\IEEEoverridecommandlockouts                             % This command is only
                                                          % needed if you want to
                                                          % use the \thanks command
\overrideIEEEmargins
% See the \addtolength command later in the file to balance the column lengths
% on the last page of the document

\usepackage[english]{babel}

\usepackage{verbatim}
\usepackage{amsmath}
\usepackage{amssymb}
\usepackage{graphicx}
\usepackage{subfigure}
\usepackage{hyperref}

% Generated with LaTeXDraw 2.0.8
% Tue Jun 05 22:19:34 PDT 2012
 \usepackage[usenames,dvipsnames]{pstricks}
 \usepackage{epsfig}
 \usepackage{pst-grad} % For gradients
 \usepackage{pst-plot} % For axes

%\graphicspath{{figures/}}

\title{\LARGE \bf A Graph Partitioning Approach to Predict Patterns in Lateral
Inhibition Systems}

\author{\authorblockN{Ana S. Rufino Ferreira, Murat Arcak}
\authorblockA{Department of Electrical Engineering \& Computer Sciences\\
University of California, Berkeley, CA\\
Emails: ana@eecs.berkeley.edu, arcak@eecs.berkeley.edu}
%\and
%\authorblockN{Murat Arcak}
%\authorblockA{Dept. of Electr. Eng. \& Comput. Sci.\\
%University of California, Berkeley, CA\\
%Email: arcak@eecs.berkeley.edu}
}

\newtheorem{theorem}{Theorem}[section]

\newtheorem{lemma}[theorem]{Lemma}

\newtheorem{corollary}[theorem]{Corollary}
\newtheorem{assumption}[theorem]{Assumption}
%\newtheorem{claim}[theorem]{Claim}
%\newproof{proof}{Proof}

%\newenvironment{proof}{\noindent{\bf Proof.}\hspace{0.5em}}
%	{\hfill \qed}

                               % document contains figures,
%\usepackage[dvips]{epsfig}    % or this line, depending on which
                               % you prefer.
% predefined environments
%\begin{thm} ... \end{thm}		% Theorem
%\begin{lem} ... \end{lem}		% Lemma
%\begin{claim} ... \end{claim}	% Claim
%\begin{conj} ... \end{conj}	% Conjecture
%\begin{cor} ... \end{cor}		% Corollary
%\begin{fact} ... \end{fact}	% Fact
%\begin{hypo} ... \end{hypo}	% Hypothesis
%\begin{prop} ... \end{prop}	% Proposition
%\begin{crit} ... \end{crit}	% Criterion

\newcommand{\R}{\mathbb{R}}

\begin{document}
\maketitle
\thispagestyle{empty}
\pagestyle{empty}
\begin{abstract}
We analyze pattern formation on a network of cells where each cell inhibits its
neighbors through cell-to-cell contact signaling. The network is modeled
as an interconnection of identical dynamical subsystems each of which represents
the signaling reactions in a cell. We search for steady state patterns by
partitioning the graph vertices into disjoint classes, where the cells in the same class have the same final fate.
To prove the existence of steady states with this structure, we use results from
monotone systems theory. Finally, we analyze the stability of these patterns with a
block decomposition based on the graph partition.
\end{abstract}

\section{Introduction}

Spatial patterning plays a crucial role in multicellular
developmental processes \cite{gilbert10,wolpert11}. The majority of
theoretical results on pattern formation rely on
diffusion-driven instabilities, proposed by Turing \cite{turing52} and further
studied by other authors \cite{gierer72, dillon94, murray03}.  
Although some early events in developmental biology employ diffusible
signals, most of the patterning that leads to segmentation and fate-specification relies on
contact-mediated signals.  
Lateral inhibition \cite{sprinzak10} is a
cell-to-cell contact signaling mechanism that leads neighboring cells to
compete and diverge to contrasting states of differentiation. An example of
lateral inhibition is the Notch pathway, where neighboring cells compete
through the interaction between Notch receptors and
Delta ligands \cite{collier96}.\\

Dynamical models for the Notch mechanism have been analyzed in \cite{collier96}
and \cite{sprinzak11}. However, these either include analytical results
for only two cells, or perform numerical simulations for larger networks. A broader dynamical
model for lateral inhibition is proposed in \cite{arcak12}, and results
that are independent of the size of the network are presented. In this
reference, the large-scale network is viewed as an interconnection of individual
cells, each defined by an input-output model.
The contact signaling is represented by an undirected graph, where each vertex is a cell,
and a link between two vertices represents the contact between two cells.
Results for the instability of the homogeneous steady state and the convergence
of two level patterns for bipartite contact graphs are presented in
\cite{arcak12}.\\

In this paper, we use the model introduced in \cite{arcak12} and
derive results for pattern formation on a general contact graph,
recovering the results of \cite{arcak12} for bipartite graphs as a special case.
Our main idea is to partition the graph vertices into disjoint classes, where the cells in the same
class have the same final fate.
We use algebraic properties of the graph and tools from monotone systems theory
\cite{smith95} to prove the existence of steady states that are
patterned according to these partitions. Finally, we address the stability
of these patterns by decomposing the system into two subsystems. The first
describes the dynamics on an invariant subspace defined according to the
partition; and the second describes the dynamics transversal to this subspace.\\

A key property that each partition must satisfy is that the sum of the weights
from one vertex in one class to those in another class is independent of the
choice of vertex. Partitions with this property are called \textit{equitable}
and allow us to study a reduced model where all vertices in the
same class have the same state.
As examples of equitable partitions, we study bipartite graphs and graphs with
symmetries. For symmetric graphs, we show that subgroups of the automorphism
group of a graph can be used to identify equitable partitions.\\

The idea of grouping the vertices of a network into classes of
synchronized states has been explored in \cite{stewart03,golubitsky06}; however,
no investigation about steady-states and their stability is pursued, and no
biological application is addressed.
Symmetry properties have been exploited in the dimension reduction and
block decomposition of semidefinite program optimization problems, such as
fastest mixing Markov chains on the graph \cite{boyd05, boyd09}, and
sum-of-squares \cite{gatermann04}.
Symmetry has also been related to
controllability on controlled agreement problems \cite{rahmani09}.\\

In Section II, we define the model and introduce necessary
graph theoretical concepts. In Section III, we present the main result of the
paper, which provides conditions for the existence of steady-state patterns.
In Sections IV, we apply the main results to graphs with symmetries, and present
examples. Finally, in Section V, we present a decomposition that is helpful
for the stability analysis of steady-state patterns, and a small gain
stability type criterion.

\section{Lateral Inhibition Model}
We represent the cell network by an undirected and connected
graph $\mathcal{G}=\mathcal{G}(V,E)$, where the set of vertices $V$ represents a
group of cells, and each edge $e\in E$ represents a contact
between two cells.
The strength of the contact signal between cells $i$ and $j$ is defined
by the nonnegative constant $w_{i,j}$. We let $w_{i,j}=0$ when $i$ and $j$ are not in
contact and allow uneven weights to represent distinct contact signal
strengths.
This \textit{contact graph} is undirected, \textit{i.e.},
$W=\{w_{ij}\}$ is symmetric.\\
Let $N$ be the number of cells and define the \textit{scaled
adjacency matrix} $P\in\R^{N\times N}$ of $\mathcal{G}$ as:
\begin{equation}\label{eq:adjacencymat}
p_{ij} = 
 d_i^{-1}w_{i,j},
\end{equation}
where the scaling factor is the node degree $d_i=\sum_j w_{i,j}$. 
The
definition of $P$ implies that the matrix is nonnegative and row-stochastic,
\textit{i.e.}, $P\mathbf{1}_N=\mathbf{1}_N$, where $\mathbf{1}_N\in\R^N$ denotes
the vector of ones. The structure of $P$ is identical to the transposed
probability transition matrix of a reversible Markov Chain. Therefore, $P$ has real
valued eigenvalues and eigenvectors.\\

Consider a network of identical cells $i=1,\ldots,N$ whose
dynamical model is given by:
\begin{equation}\label{eq:celldynamics1}
\left\{
\begin{array}{rcl}
\dot{x_i}&=&f(x_i,u_i)\\
y_i&=&h(x_i)
\end{array}
\right.
\end{equation}
where $x_i\in\mathcal{X}\subset\R^n$ describes the state in cell $i$,
$u_i\in\mathcal{U}\subset\R$ is an aggregate input from neighboring cells, and
$y_i\in\mathcal{Y}\subset\R$ represents the output of each cell that contributes
to the input to adjacent cells.\\
We represent the cell-to-cell interaction by
\begin{equation}\label{eq:interconnection}
u=Py
\end{equation}
where $P$ is the scaled adjacency matrix of the contact graph as in
\eqref{eq:adjacencymat}, $u:=[u_{1}\ldots u_{N}]^T$, and $y:=[y_{1}\ldots
y_{N}]^T$. This means that the input to each cell is a weighted average of
the outputs of adjacent cells.\\

We assume that $f(\cdot,\cdot)$ and $h(\cdot)$
are continuously differentiable, and that for each constant input
$u^*\in\mathcal{U}$, system \eqref{eq:celldynamics1} has a globally
asymptotically stable steady-state
\begin{equation}
x^*:=S(u^*).
\end{equation} 
Furthermore, we assume
that the map $S:\mathcal{U}\rightarrow\mathcal{X}$ and map
$T:\mathcal{U}\rightarrow\mathcal{Y}$, defined as:
\begin{equation}\label{eq:staticmodel}
T(\cdot):=h(S(\cdot)),
\end{equation}
are continuously differentiable, and that $T(\cdot)$ is a positive, bounded,
and decreasing function. The decreasing property of $T$
is consistent with the lateral inhibition feature, since higher
outputs in one cell lead to lower values in adjacent cells.\\

Note that the steady states of the system
\eqref{eq:celldynamics1}-\eqref{eq:interconnection} are given by $x_i=S(u_i)$
in which $u_1,\ldots,u_N$ are solutions of the equation:
\begin{equation}\label{eq:patterneq}
u=P\mathbf{T}_N(u),
\end{equation}
where
\begin{equation}
\mathbf{T}_N(u)=[T(u_1),\ldots,T(u_N)]^T.
\end{equation}
Since $P$ is row-stochastic, \eqref{eq:patterneq} admits
the homogeneous solution $u_i=u^*$ for all $i=1,\ldots,N$, where $u^*$ is the
unique fixed point of $T(\cdot)$, \textit{i.e},
\begin{equation}
T(u^*)=u^*.
\end{equation}
\section{Identifying Steady State Patterns}
%%%%%%%%%%%%%%%%%%%%%%%%%%%%%%%%%%%%%%%%%%%%%%%%%%%%%%%%%%%%%%%%%%%%%%%%%%%%%%
%EQUITABLE PARTITIONS
%%%%%%%%%%%%%%%%%%%%%%%%%%%%%%%%%%%%%%%%%%%%%%%%%%%%%%%%%%%%%%%%%%%%%%%%%%%%%%

To identify nonhomogeneous steady states, we introduce the notion of equitable
graph partitions.
For a weighted and undirected
graph $\mathcal{G}(V,E)$ with scaled adjacency matrix $P$, a partition
$\pi$ of the vertex set $V$ into classes $O_1,\ldots,O_{r}$ 
is said to be \textit{equitable} if there exist
$\overline{p}_{ij}$, $i,j=1,\ldots,r$, such that
\begin{equation}\label{eq:overlineP}
\sum_{v\in O_j}p_{uv}=\overline{p}_{ij}\quad \forall u\in O_i.
\end{equation}
This definition is a
modification of \cite[Section 9.3]{godsil01} which considers a partition based
on the weights of the graph $w_{ij}$ instead of the scaled weights $p_{ij}$ in
\eqref{eq:adjacencymat}.\\

The \textit{quotient matrix} $\overline{P}\in\R^{r\times r}$ is formed by the
entries $\overline{p}_{ij}$. It is also a row-stochastic matrix, and its
eigenvalues are a subset of the eigenvalues of $P$, as
can be shown with a slight modification of \cite[Thm9.3.3]{godsil01}.
As we will further discuss, equitable partitions are easy to identify in
bipartite graphs, and in graphs with symmetries.\\

We
search for nonhomogeneous solutions to \eqref{eq:patterneq} in which the entries
corresponding to cells in the same class have the same value. This means that we examine the
reduced set of equations
\begin{equation}\label{eq:reducedequation}
z=\overline{P}\mathbf{T}_r(z),
\end{equation}
where $\overline{P}$ is the quotient matrix of the contact graph $\mathcal{G}$,
and $z\in\R^r$. The patterns determined from the solutions of
\eqref{eq:reducedequation} are structured in such a way that all cells in the
same class have the same fate, \textit{i.e},
\begin{equation}
u_i=z_j \text{ for all } i\in O_j.
\end{equation}\\
We now present a procedure to determine if \eqref{eq:reducedequation} has a
nonhomogeneous solution. Define the \textit{reduced graph}
$\mathcal{G}_{\pi}$ to be a simple graph in which the vertex set is
$\tilde{V}=\{O_1,\ldots,O_r\}$ and the edge set is
\begin{equation}
\tilde{E}=\{(O_i,O_j): i\neq j, \text{ } \overline{p}_{ij}\neq 0 \text{ or }
\overline{p}_{ji}\neq 0\}.
\end{equation}
Note that we omit self-loops in
$\mathcal{G}_{\pi}$ even if $\overline{p}_{ii}\neq 0$.\\

\begin{assumption}\label{asmpt:genbipartite}
The reduced graph $\mathcal{G}_{\pi}$ is bipartite.\\
\end{assumption}

In the following theorem, we determine whether there exists a
solution to the reduced set of equations \eqref{eq:reducedequation} other than
the homogeneous solution $z^*=u^*\mathbf{1}_r$.

\begin{theorem}\label{thm:main}
Let $\pi$ be an equitable partition of the vertices of $\mathcal{G}$ such that
Assumption \ref{asmpt:genbipartite} holds. Let $v_r$ be the eigenvector of
$\overline{P}$ associated with the minimum eigenvalue $\lambda_r$. If $T(\cdot)$
is positive, bounded, and decreasing, and if $T'(u^*)$ is such that
\begin{equation}\label{eq:bifurcationcondition2}
|T'(u^*)|\lambda_r<-1,
\end{equation}
then there exists a solution of \eqref{eq:reducedequation} other than
\begin{equation}
z^*=u^*\mathbf{1}_r.
\end{equation}
\end{theorem}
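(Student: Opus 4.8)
The plan is to realize the solutions of \eqref{eq:reducedequation} as the equilibria of the auxiliary dynamical system
\begin{equation}
\dot z = -z + \overline{P}\,\mathbf{T}_r(z),
\end{equation}
whose steady states are exactly the fixed points $z=\overline{P}\,\mathbf{T}_r(z)$, and to produce a second equilibrium using monotone systems theory \cite{smith95}. First I would examine the Jacobian $J(z)=-I+\overline{P}\,\text{diag}(T'(z_1),\ldots,T'(z_r))$. Since $\overline{p}_{ij}\ge 0$ and $T'<0$, every off-diagonal entry $\overline{p}_{ij}T'(z_j)$ is nonpositive, so the system is competitive rather than cooperative. The decisive use of Assumption \ref{asmpt:genbipartite} is a sign change: letting $A,B$ be the two parts of the bipartition of $\mathcal{G}_\pi$ and $D=\text{diag}(\sigma_1,\ldots,\sigma_r)$ with $\sigma_i=+1$ on $A$ and $\sigma_i=-1$ on $B$, the substitution $\tilde z=Dz$ (note $D^{-1}=D$) conjugates $J$ to $DJD$, whose off-diagonal entries $\sigma_i\sigma_j\overline{p}_{ij}T'(z_j)$ vanish when $i,j$ lie in the same part (there $\overline{p}_{ij}=0$ by bipartiteness) and are nonnegative when they lie in different parts (there $\sigma_i\sigma_j=-1$). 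Hence the transformed system $\dot{\tilde z}=-\tilde z+D\overline{P}\,\mathbf{T}_r(D\tilde z)$ is cooperative and generates a monotone flow.

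Next I would linearize at the image $\tilde z^*=Du^*\mathbf{1}_r$ of the homogeneous equilibrium, where the diagonal factor reduces to $T'(u^*)I$ and $J(z^*)=-I+T'(u^*)\overline{P}$. Because $\overline{P}v_k=\lambda_k v_k$, the eigenvalues are $-1+T'(u^*)\lambda_k$ with eigenvectors $v_k$, and $DJD$ shares these eigenvalues with eigenvectors $Dv_k$. The largest eigenvalue $\lambda_1=1$ of $\overline{P}$ gives $-1+T'(u^*)<0$, while the minimum eigenvalue gives $-1+T'(u^*)\lambda_r$, which is positive precisely when \eqref{eq:bifurcationcondition2} holds; thus $\tilde z^*$ is linearly unstable along $Dv_r$. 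To exploit monotonicity I need this unstable direction to lie in the positive cone: the matrix $-D\overline{P}D$ is Metzler (its off-diagonal entries $\overline{p}_{ij}\ge 0$ appear only across parts, by the same bipartite computation) and irreducible because $\mathcal{G}$, hence $\mathcal{G}_\pi$, is connected. Its eigenvalues are $\{-\lambda_k\}$, so its spectral abscissa is $-\lambda_r$, and by Perron--Frobenius the corresponding eigenvector $Dv_r$ may be chosen with $Dv_r\gg 0$.

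Finally I would build a strict subsolution. Taylor expanding the transformed vector field at $\tilde z^*+\epsilon Dv_r$ yields $\bigl(-1+T'(u^*)\lambda_r\bigr)\epsilon\,Dv_r+O(\epsilon^2)$; since the scalar factor is positive and $Dv_r\gg 0$, for sufficiently small $\epsilon>0$ the right-hand side is $\gg 0$ componentwise, so $\tilde z^*+\epsilon Dv_r$ is a subsolution. Boundedness of $T(\cdot)$ makes the transformed system dissipative, as the term $D\overline{P}\,\mathbf{T}_r(D\tilde z)$ is bounded while $-\tilde z$ dominates for large $\|\tilde z\|$; hence the trajectory issuing from this subsolution is monotone nondecreasing and bounded, so it converges to an equilibrium $\tilde z^+\gg\tilde z^*$. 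Transforming back gives a solution $z^+=D\tilde z^+$ of \eqref{eq:reducedequation} with $z^+\neq z^*$, the desired nonhomogeneous pattern, and a symmetric argument from $\tilde z^*-\epsilon Dv_r$ produces a second equilibrium below $\tilde z^*$.

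I expect the main obstacle to be the strict positivity of the unstable eigenvector: showing that $Dv_r$ lies in the interior of the order cone is what couples the spectral condition \eqref{eq:bifurcationcondition2} to the monotone-systems machinery, and it is where bipartiteness, connectedness, and the Perron--Frobenius structure of the Metzler matrix $-D\overline{P}D$ must be invoked together. The remaining care is to make the subsolution inequality rigorous to leading order in $\epsilon$ (this is exactly why strict positivity $Dv_r\gg 0$, rather than mere nonnegativity, is required) and to confirm dissipativity so that the monotone trajectory converges to an equilibrium rather than escaping to infinity.
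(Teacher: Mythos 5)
Your proposal is correct and follows essentially the same route as the paper: the same auxiliary system $\dot z=-z+\overline{P}\,\mathbf{T}_r(z)$, the same bipartite sign-change $D$ (the paper's $R$) to render the system cooperative, the same spectral computation giving $s(J)=-1+T'(u^*)\lambda_r>0$, and the same Perron--Frobenius/irreducibility argument for a strictly positive unstable eigenvector. The only difference is that where the paper invokes \cite[Theorem 4.3.3]{smith95} together with the forward-invariant sets $S_1,S_2$, you re-derive that step by hand via the subsolution $\tilde z^*\pm\epsilon Dv_r$, monotone convergence, and dissipativity from the boundedness of $T$ --- which is precisely the mechanism behind the cited theorem.
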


\begin{proof}
Consider the auxiliary dynamical system:
\begin{equation}\label{eq:quotientsystem}
\left[\begin{array}{c} \dot{z}_1\\ \vdots\\ \dot{z}_r \end{array}
\right] = -\left[\begin{array}{c} z_1\\ \vdots\\ z_r \end{array}
\right] + \overline{P} \left[\begin{array}{c} T(z_1)\\ \vdots\\
T(_{z_r}) \end{array} \right]:=F(z),\text{ }
z\in\R^{r}_{\geq 0}.
\end{equation}
Note that around the homogeneous steady state $z^*$, the Jacobian matrix
\begin{equation}\label{eq:jacobianmat}
DF(z^*) = -I_r+T'(u^*)\overline{P}
\end{equation}
is a nonpositive matrix (since $\overline{P}$ is nonnegative, and from
\eqref{eq:bifurcationcondition2}, $T'(u^*)<0$).\\
We show that under a coordinate transformation the system is
\textit{cooperative}, see \cite[Definition 3.1.3]{smith95}. Following the
bipartite property of $\mathcal{G}_{\pi}$ in Assumption
\ref{asmpt:genbipartite}, we define a partition $J\subset\{1,\ldots,r\}$ and
$J'=\{1,\ldots,r\}\backslash J$ such that no two vertices in the same set are
adjacent. Let $\epsilon_j=0$ if $j\in J$ and $\epsilon_j=1$ if $j\in J'$, and
choose the transformation $Rz$ to be
\begin{equation}
R=diag\{(-1)^{\epsilon_1},\ldots,(-1)^{\epsilon_j},\ldots,(-1)^{\epsilon_r}\}.
\end{equation}
Since the reduced graph is bipartite, $R^{-1}\overline{P}R=R\overline{P}R$ is
a matrix similar to $\overline{P}$ and all of its off-diagonal elements are
nonpositive. In the new coordinates $Rz$, the Jacobian matrix in
\eqref{eq:jacobianmat} becomes $J:=R(DF(z^*))R$ and has nonnegative
off-diagonal elements. This means that the system is cooperative.\\
To prove the existence of a solution $\tilde{z}\neq z^*$ of
\eqref{eq:reducedequation}, we appeal to \cite[Theorem 4.3.3]{smith95} which
stipulates that the largest real part of the eigenvalues of $J$ (designated
as $s(J)$) to be positive with associated eigenvector $v\gg 0$ (\textit{i.e.},
all elements are positive); and that there exists a bounded forward
invariant set.\\
First, note that $J$ is a quasi-positive and irreducible matrix (this is
because the reduced graph is connected, and $T'(u^*)\neq 0$).
Then, we know from \cite[Corollary 4.3.2]{smith95} that there exists an
eigenvector $v\gg 0$ such that $Jv=s(J)v$. For this case, the eigenvalues of $J$
are all real and given by
\begin{equation}\label{eq:evalsreduced}
-1+\lambda_kT'(u^*),\ k=1,\ldots,r,
\end{equation}
where $\lambda_k$ are the eigenvalues of $\overline{P}$. Therefore,
$s(J)=-1+T'(u^*)\lambda_r$. From condition \eqref{eq:bifurcationcondition2} we
conclude that $s(J)>0$ with positive eigenvector $v$, and that $v_r=Rv$ is an
eigenvector of $\overline{P}$ associated with $\lambda_r$ (\textit{i.e.},
$\overline{P}Rv=\lambda_rRv$).\\
Second, since the transformed cooperative system is monotone with respect to
the standard cone $R^r_{\geq 0}$, we conclude that $u^*\mathbf{1}_r+R^r_{\geq
0}$ and $u^*\mathbf{1}_r-R^r_{\geq 0}$ are forward invariant. Furthermore,
since $T(\cdot)$ is bounded and decreasing (and $T(u^*)=u^*$), there exists an
hypercube $[0,\overline{u}]^{r}$, with $0<u^*<\overline{u}$ which is also
forward invariant. This can be seen from the fact that at $z=0$,
$\dot{z}=\overline{P}T(0)\geq 0$, and at $z=\overline{u}$, $\dot{z}\leq 0$
(since $\overline{u}>T(\overline{u})$). The sets
\begin{equation}
S_1=(u^*\mathbf{1}+R^r_{\geq 0})\cap[0,\overline{u}]^{r} \text{ and }
S_2=(u^*\mathbf{1}-R^r_{\geq 0})\cap[0,\overline{u}]^{r},
\end{equation}
are forward invariant. Therefore, we conclude from \cite[Theorem
4.3.3]{smith95}, there exists an equilibrium point $\tilde{z}\neq z^*$, and it
satisfies \eqref{eq:reducedequation}.
\end{proof}

\vspace{0.5cm}
\textit{Example 1: Checkerboard Patterns in Bipartite Graphs}\\
Suppose that the contact graph $\mathcal{G}$ is bipartite, and choose $O_1$ and
$O_2$ to be the partition such that every edge can only connect a vertex in
$O_1$ to a vertex in $O_2$. Then, up to vertex relabeling, the scaled adjacency
matrix of $\mathcal{G}$ can be written as
\begin{equation}
P=\left[\begin{array}{cc}
0 & P_{12}\\
P_{21} & 0
\end{array}
\right].
\end{equation}
Since the rows of $P_{12}$ (and also rows of $P_{21}$) sum up to $1$, we
conclude that $\pi$, consisting of sets $O_1$ and $O_2$, is
an equitable partition.
Moreover, the reduced graph $\mathcal{G}_{\pi}$ is itself bipartite
(\textit{i.e.}, Assumption \ref{asmpt:genbipartite} holds), and matrix
$\overline{P}$ is given by
\begin{equation}
\overline{P}=\left[\begin{array}{cc} 0 & 1\\ 1 & 0 \end{array} \right].
\end{equation}
Since the eigenvalues/eigenvectors of
$\overline{P}$ are $\lambda_1=1$ and
$\lambda_2=-1$, the next Corollary follows.\\

\begin{corollary}\label{cor:mainBipartite}
Let $\mathcal{G}$ be bipartite, and define
a partition $O_1\subset\{1,\ldots,N\}$ and $O_2=\{1,\ldots,N\}\backslash O_1$
such that no two vertices in the same set are adjacent. Then, if 
\begin{equation}
|T'(u^*)|>1,
\end{equation}
there exists a steady state $u=[u_1\ldots u_N]$ such
that $u_i=z_1$ if $i\in O_1$, and $u_i=z_2$ if $i\in O_2$, and
$z_1\neq z_2\neq u^*$.\hfill $\blacksquare$
\end{corollary}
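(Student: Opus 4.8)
The plan is to derive Corollary~\ref{cor:mainBipartite} as a direct
specialization of Theorem~\ref{thm:main} to the bipartite setting already
worked out in Example~1. The preceding discussion has established that for a
bipartite contact graph $\mathcal{G}$, the two-class partition
$\pi=\{O_1,O_2\}$ (where no edge joins two vertices inside the same class) is
equitable, and that the associated quotient matrix is
\begin{equation}
\overline{P}=\left[\begin{array}{cc} 0 & 1\\ 1 & 0 \end{array} \right],
\end{equation}
whose reduced graph $\mathcal{G}_{\pi}$ is a single edge and hence trivially
bipartite. Thus Assumption~\ref{asmpt:genbipartite} is satisfied, and the
hypotheses of Theorem~\ref{thm:main} on $\pi$ are in place; it remains only to
match the eigenvalue condition and to translate the conclusion back to the
full vertex set.

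First I would record the spectral data of $\overline{P}$: its eigenvalues are
$\lambda_1=1$ and $\lambda_2=-1$, with the minimum eigenvalue being
$\lambda_r=\lambda_2=-1$ and corresponding eigenvector
$v_r=[1,\,-1]^T$. Substituting $\lambda_r=-1$ into the bifurcation condition
\eqref{eq:bifurcationcondition2} of Theorem~\ref{thm:main},
$|T'(u^*)|\lambda_r<-1$, gives $-|T'(u^*)|<-1$, which is equivalent to the
hypothesis $|T'(u^*)|>1$ of the corollary. Hence, under the stated assumption,
Theorem~\ref{thm:main} applies and yields a solution
$\tilde{z}=[z_1,z_2]^T\neq z^*=u^*\mathbf{1}_2$ of the reduced equation
\eqref{eq:reducedequation}.

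Next I would lift this reduced solution back to the original network. By the
construction preceding \eqref{eq:reducedequation}, any solution $z$ of the
quotient equation induces a genuine steady state $u$ of \eqref{eq:patterneq}
via $u_i=z_j$ for all $i\in O_j$; this is precisely the content of the
equitable-partition reduction. Setting $u_i=z_1$ for $i\in O_1$ and $u_i=z_2$
for $i\in O_2$ therefore produces a steady state of the full system. It remains
to verify the nondegeneracy claim $z_1\neq z_2\neq u^*$. That
$\tilde{z}\neq z^*$ already rules out $z_1=z_2=u^*$; to see that the two
entries are genuinely distinct and neither equals $u^*$, I would argue from the
structure of $\overline{P}$: the reduced equations read $z_1=T(z_2)$ and
$z_2=T(z_1)$, so if $z_1=u^*$ then $z_2=T(u^*)=u^*$ forcing $\tilde{z}=z^*$, a
contradiction, and symmetrically for $z_2=u^*$; finally $z_1=z_2$ would make
$\tilde{z}$ a homogeneous solution, again equal to $z^*$ by uniqueness of the
fixed point of $T$. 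Hence $z_1\neq z_2$ and both differ from $u^*$.

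The only real subtlety, and the step I expect to require the most care, is this
nondegeneracy argument: Theorem~\ref{thm:main} guarantees only
$\tilde{z}\neq z^*$, whereas the corollary asserts the stronger two-level
contrast $z_1\neq z_2\neq u^*$. Everything else is a mechanical verification
that the bipartite instance meets the theorem's hypotheses, so the burden of
the proof lies in extracting the sharper conclusion from the special product
structure $z_1=T(z_2)$, $z_2=T(z_1)$ together with the uniqueness of the
homogeneous fixed point $T(u^*)=u^*$.
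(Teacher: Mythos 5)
Your proposal is correct and takes essentially the same route as the paper, which establishes the corollary by the discussion in Example~1 (the two-class partition of a bipartite graph is equitable with quotient matrix $\overline{P}=\left[\begin{smallmatrix}0&1\\1&0\end{smallmatrix}\right]$, whose minimum eigenvalue $\lambda_2=-1$ turns condition \eqref{eq:bifurcationcondition2} into $|T'(u^*)|>1$) and then invokes Theorem~\ref{thm:main}. Your closing nondegeneracy argument --- using $z_1=T(z_2)$, $z_2=T(z_1)$, and the uniqueness of the fixed point of $T$ to upgrade $\tilde{z}\neq z^*$ to $z_1\neq z_2\neq u^*$ --- is sound and actually supplies a step the paper leaves implicit.
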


\vspace{0.5cm}
The steady state defined by Corollary \ref{cor:mainBipartite} results in a
``checkerboard'' pattern as in Figure \ref{fig:twomeshresults}(A), since it has
distinct states for adjacent cells.

\section{Graphs with Symmetries}
An important class of equitable partitions results from graph
symmetries, which are formalized with the notion of graph automorphisms.
For a weighted graph $\mathcal{G}(V,E)$, an \textit{automorphism} is a
permutation $g:V\rightarrow V$ such that if $(i,j)\in E$ then also $(gi,gj)\in
E$ and $w_{i,j}=w_{gi,gj}$,  where $gi$ denotes the image of
vertex $i$ under permutation $g$. The
set of all automorphisms forms a group designated by \textit{automorphism
group}, $Aut(\mathcal{G})$.
A subset $H$ of a full automorphism group $Aut(\mathcal{G})$ is called a
\textit{subgroup} if $H$ is closed under composition and inverse.\\
Let
$H$ be a subgroup of a full automorphism group $Aut(\mathcal{G})$. Then, the
action of all permutations $h\in H$ forms a partition of the vertex
set into \textit{orbits},
$O_i=\{hi:h\in H\}$, such that $O_j=O_i$ for all $j\in O_i$. Let $r$ be
the number of distinct orbits under the subgroup $H$, and relabel them as
$\{\overline{O}_1,\ldots,\overline{O}_{r}\}$. This orbit partition is
equitable, because the sum $\sum_{k\in
\overline{O}_j}p_{i^*k}=\overline{p}_{ij}$ is constant independently of the
choice of $i^*\in\overline{O}_i$.\\
Since any subgroup of the full automorphism group of a
graph leads to an equitable partition, we conclude by Theorem \ref{thm:main}
that any orbit partition generated by a subgroup of $Aut(\mathcal{G})$ is a
candidate for a pattern structured according to this partition.\\

\textit{Example 2: Two-Dimensional Mesh}\\
Consider a two-dimensional mesh with wraparounds
as in Figure \ref{fig:twodimmesh}. Since the graph is bipartite, an equitable partition is
given by the two disjoint subsets of vertices $O_1=\{1,3,6,8,9,11,14,16\}$, and
$O_2=\{2,4,5,7,10,12,13,15\}$. From Corollary \ref{cor:mainBipartite}, we know
that a pattern with final value $u_{1}$ for all cells in $O_1$, and
$u_{2}$ for all cells in $O_2$, with $u_{1}\neq
u_{2}\neq u^*$, is a steady state of the network when
$|T'(u^*)|>1$; see Figure \ref{fig:twomeshresults} (A).\\
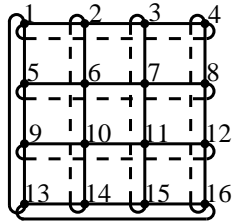
\begin{figure}[ht]
\centering
% Generated with LaTeXDraw 2.0.8
% Wed Jun 20 01:16:00 PDT 2012
% \usepackage[usenames,dvipsnames]{pstricks}
% \usepackage{epsfig}
% \usepackage{pst-grad} % For gradients
% \usepackage{pst-plot} % For axes
\scalebox{1} % Change this value to rescale the drawing.
{
\begin{pspicture}(0,-1.47)(3.02,1.49)
\psline[linewidth=0.04cm](0.2,1.15)(2.6,1.15)
\psline[linewidth=0.04cm](0.2,1.15)(0.2,-1.25)
\psline[linewidth=0.04cm](0.2,-1.25)(2.6,-1.25)
\psline[linewidth=0.04cm](0.2,-0.45)(2.6,-0.45)
\psline[linewidth=0.04cm](0.2,0.35)(2.6,0.35)
\psline[linewidth=0.04cm](2.6,1.15)(2.6,-1.25)
\psline[linewidth=0.04cm](1.8,-1.25)(1.8,1.15)
\psline[linewidth=0.04cm](1.0,-1.25)(1.0,1.15)
\rput{-90.0}(1.57,3.67){\psarc[linewidth=0.04](2.62,1.05){0.1}{0.0}{180.0}}
\rput{-270.0}(1.23,0.85){\psarc[linewidth=0.04](0.19,1.04){0.09}{0.0}{180.0}}
\psline[linewidth=0.04cm,linestyle=dashed,dash=0.16cm 0.16cm](0.2,0.95)(2.6,0.95)
\usefont{T1}{ptm}{m}{n}
\rput(0.27,1.295){1}
\usefont{T1}{ptm}{m}{n}
\rput(1.15,1.295){2}
\usefont{T1}{ptm}{m}{n}
\rput(1.94,1.295){3}
\usefont{T1}{ptm}{m}{n}
\rput(2.73,1.275){4}
\usefont{T1}{ptm}{m}{n}
\rput(0.32,0.495){5}
\usefont{T1}{ptm}{m}{n}
\rput(1.13,0.495){6}
\usefont{T1}{ptm}{m}{n}
\rput(1.93,0.495){7}
\usefont{T1}{ptm}{m}{n}
\rput(2.71,0.495){8}
\usefont{T1}{ptm}{m}{n}
\rput(0.35,-0.305){9}
\usefont{T1}{ptm}{m}{n}
\rput(1.18,-0.305){10}
\usefont{T1}{ptm}{m}{n}
\rput(1.96,-0.305){11}
\usefont{T1}{ptm}{m}{n}
\rput(2.78,-0.305){12}
\usefont{T1}{ptm}{m}{n}
\rput(0.37,-1.105){13}
\usefont{T1}{ptm}{m}{n}
\rput(1.18,-1.105){14}
\usefont{T1}{ptm}{m}{n}
\rput(1.97,-1.105){15}
\usefont{T1}{ptm}{m}{n}
\rput(2.78,-1.105){16}
\psdots[dotsize=0.12](0.2,1.15)
\psdots[dotsize=0.12](1.0,1.15)
\psdots[dotsize=0.12](1.8,1.15)
\psdots[dotsize=0.12](2.6,1.15)
\psdots[dotsize=0.12](0.2,0.35)
\psdots[dotsize=0.12](1.0,0.35)
\psdots[dotsize=0.12](1.8,0.35)
\psdots[dotsize=0.12](2.6,0.35)
\psdots[dotsize=0.12](0.2,-0.45)
\psdots[dotsize=0.12](1.0,-0.45)
\psdots[dotsize=0.12](1.8,-0.45)
\psdots[dotsize=0.12](2.6,-0.45)
\psdots[dotsize=0.12](0.2,-1.25)
\psdots[dotsize=0.12](1.0,-1.25)
\psdots[dotsize=0.12](1.8,-1.25)
\psdots[dotsize=0.12](2.6,-1.25)
\rput{-90.0}(2.37,2.87){\psarc[linewidth=0.04](2.62,0.25){0.1}{0.0}{180.0}}
\rput{-270.0}(0.43,0.05){\psarc[linewidth=0.04](0.19,0.24){0.09}{0.0}{180.0}}
\psline[linewidth=0.04cm,linestyle=dashed,dash=0.16cm 0.16cm](0.2,0.15)(2.6,0.15)
\rput{-90.0}(3.17,2.07){\psarc[linewidth=0.04](2.62,-0.55){0.1}{0.0}{180.0}}
\rput{-270.0}(-0.37,-0.75){\psarc[linewidth=0.04](0.19,-0.56){0.09}{0.0}{180.0}}
\psline[linewidth=0.04cm,linestyle=dashed,dash=0.16cm 0.16cm](0.2,-0.65)(2.6,-0.65)
\rput{-90.0}(3.97,1.27){\psarc[linewidth=0.04](2.62,-1.35){0.1}{0.0}{180.0}}
\rput{-270.0}(-1.17,-1.55){\psarc[linewidth=0.04](0.19,-1.36){0.09}{0.0}{180.0}}
\psline[linewidth=0.04cm](0.16,-1.45)(2.62,-1.45)
\rput{-180.0}(5.02,-2.52){\psarc[linewidth=0.04](2.51,-1.26){0.1}{0.0}{180.0}}
\psarc[linewidth=0.04](2.5,1.17){0.09}{0.0}{180.0}
\psline[linewidth=0.04cm,linestyle=dashed,dash=0.16cm 0.16cm](2.41,1.16)(2.41,-1.24)
\rput{-180.0}(3.42,-2.52){\psarc[linewidth=0.04](1.71,-1.26){0.1}{0.0}{180.0}}
\psarc[linewidth=0.04](1.7,1.17){0.09}{0.0}{180.0}
\psline[linewidth=0.04cm,linestyle=dashed,dash=0.16cm 0.16cm](1.61,1.16)(1.61,-1.24)
\rput{-180.0}(1.82,-2.52){\psarc[linewidth=0.04](0.91,-1.26){0.1}{0.0}{180.0}}
\psarc[linewidth=0.04](0.9,1.17){0.09}{0.0}{180.0}
\psline[linewidth=0.04cm,linestyle=dashed,dash=0.16cm 0.16cm](0.81,1.16)(0.81,-1.24)
\rput{-180.0}(0.21,-2.53){\psarc[linewidth=0.04](0.105,-1.265){0.105}{0.0}{180.0}}
\psarc[linewidth=0.04](0.095,1.175){0.095}{0.0}{180.0}
\psline[linewidth=0.04cm](0.0,1.19)(0.0,-1.27)
\end{pspicture} 
}
\caption{Graph representation for a two-dimensional mesh with wraparounds.}
\label{fig:twodimmesh}
\end{figure}

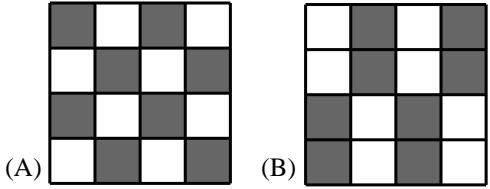
\begin{figure}[ht]
\centering
% Generated with LaTeXDraw 2.0.8
% Mon Sep 03 01:54:12 PDT 2012
% \usepackage[usenames,dvipsnames]{pstricks}
% \usepackage{epsfig}
% \usepackage{pst-grad} % For gradients
% \usepackage{pst-plot} % For axes
\scalebox{1} % Change this value to rescale the drawing.
{
\begin{pspicture}(0,-1.23)(6.42,1.23)
\definecolor{color647b}{rgb}{0.4,0.4,0.4}
\psframe[linewidth=0.04,linecolor=color647b,dimen=outer,fillstyle=solid,fillcolor=color647b](1.8,-0.59)(1.2,-1.19)
\psframe[linewidth=0.04,linecolor=color647b,dimen=outer,fillstyle=solid,fillcolor=color647b](1.2,0.01)(0.6,-0.59)
\psframe[linewidth=0.04,linecolor=color647b,dimen=outer,fillstyle=solid,fillcolor=color647b](2.4,1.21)(1.8,0.61)
\psframe[linewidth=0.04,linecolor=color647b,dimen=outer,fillstyle=solid,fillcolor=color647b](3.0,0.61)(2.4,0.01)
\psframe[linewidth=0.04,linecolor=color647b,dimen=outer,fillstyle=solid,fillcolor=color647b](2.4,0.01)(1.8,-0.59)
\psframe[linewidth=0.04,linecolor=color647b,dimen=outer,fillstyle=solid,fillcolor=color647b](3.0,-0.59)(2.4,-1.19)
\psframe[linewidth=0.04,linecolor=color647b,dimen=outer,fillstyle=solid,fillcolor=color647b](1.2,1.21)(0.6,0.61)
\psframe[linewidth=0.04,linecolor=color647b,dimen=outer,fillstyle=solid,fillcolor=color647b](1.8,0.61)(1.2,0.01)
\psline[linewidth=0.04cm](0.6,1.21)(3.0,1.21)
\psline[linewidth=0.04cm](0.6,1.21)(0.6,-1.19)
\psline[linewidth=0.04cm](1.2,1.21)(1.2,-1.19)
\psline[linewidth=0.04cm](1.8,1.21)(1.8,-1.19)
\psline[linewidth=0.04cm](2.4,1.21)(2.4,-1.19)
\psline[linewidth=0.04cm](3.0,1.21)(3.0,-1.19)
\psline[linewidth=0.04cm](0.6,0.61)(3.0,0.61)
\psline[linewidth=0.04cm](0.6,0.01)(3.0,0.01)
\psline[linewidth=0.04cm](0.6,-0.59)(3.0,-0.59)
\psline[linewidth=0.04cm](3.0,-1.19)(0.6,-1.19)
\usefont{T1}{ptm}{m}{n}
\rput(0.25,-1.005){(A)}
\usefont{T1}{ptm}{m}{n}
\rput(3.64,-1.005){(B)}
\psframe[linewidth=0.04,linecolor=color647b,dimen=outer,fillstyle=solid,fillcolor=color647b](4.6,-0.61)(4.0,-1.21)
\psframe[linewidth=0.04,linecolor=color647b,dimen=outer,fillstyle=solid,fillcolor=color647b](4.6,-0.01)(4.0,-0.61)
\psframe[linewidth=0.04,linecolor=color647b,dimen=outer,fillstyle=solid,fillcolor=color647b](5.8,-0.61)(5.2,-1.21)
\psframe[linewidth=0.04,linecolor=color647b,dimen=outer,fillstyle=solid,fillcolor=color647b](5.8,-0.01)(5.2,-0.61)
\psframe[linewidth=0.04,linecolor=color647b,dimen=outer,fillstyle=solid,fillcolor=color647b](6.4,0.59)(5.8,-0.01)
\psframe[linewidth=0.04,linecolor=color647b,dimen=outer,fillstyle=solid,fillcolor=color647b](6.4,1.19)(5.8,0.59)
\psframe[linewidth=0.04,linecolor=color647b,dimen=outer,fillstyle=solid,fillcolor=color647b](5.2,0.59)(4.6,-0.01)
\psframe[linewidth=0.04,linecolor=color647b,dimen=outer,fillstyle=solid,fillcolor=color647b](5.2,1.19)(4.6,0.59)
\psline[linewidth=0.04cm](4.0,1.19)(6.4,1.19)
\psline[linewidth=0.04cm](4.0,1.19)(4.0,-1.21)
\psline[linewidth=0.04cm](4.6,1.19)(4.6,-1.21)
\psline[linewidth=0.04cm](5.2,1.19)(5.2,-1.21)
\psline[linewidth=0.04cm](5.8,1.19)(5.8,-1.21)
\psline[linewidth=0.04cm](6.4,1.19)(6.4,-1.21)
\psline[linewidth=0.04cm](4.0,0.59)(6.4,0.59)
\psline[linewidth=0.04cm](4.0,-0.01)(6.4,-0.01)
\psline[linewidth=0.04cm](4.0,-0.61)(6.4,-0.61)
\psline[linewidth=0.04cm](6.4,-1.21)(4.0,-1.21)
\end{pspicture} 
}
\caption{Patterns for the graph in Figure \eqref{fig:twodimmesh}.}
\label{fig:twomeshresults}
\end{figure}

We next consider the automorphism subgroup that
is generated by a combination of two cell rotations in the horizontal direction, one cell
rotation in the vertical direction, and one cell rotation in both vertical
and horizontal directions. This subgroup leads to the orbits
$O_1=\{1,3,5,7,10,12,14,16\}$, $O_2=\{2,4,6,8,9,11,13,15\}$. The
quotient matrix associated with this partition is given by
\begin{equation*}
\overline{P}_B=\left[\begin{array}{cc}
\tfrac{1}{4} & \tfrac{3}{4}\\ \tfrac{3}{4} & \tfrac{1}{4}
\end{array}\right],
\end{equation*}
and has eigenvalues $-1/2$ and $1$. Therefore, from Theorem \ref{thm:main}, a
steady state state given by $z=\overline{P}\mathbf{T}(z)$ exists if
$|T'(u^*)|>2$.\\

In this example, the equitable partition obtained from the
bipartite property of the contact graph can also be obtained by a subgroup of
the automorphism group of the graph. However, this is in general not true;
the four cell path is an example of a graph with a bipartite partition that
cannot be defined by an orbit partition.\\

The computation of automorphism groups, and the identification of the reduced
order systems, becomes cumbersome as the size and symmetries of the graphs
increase.
However, these can be obtained from a computer algebra system with
emphasis on computational group theory, such as GAP, \cite{gap4}.\\

\textit{Example 3: Two-dimensional Hexagonal Cyclic Lattice}

The number of distinct equitable partitions in a hexagonal lattice of cells is
considerably large \cite{wang05}. We use computational algebra algorithms to
find all the possible two-level equitable partitions obtained by automorphisms
subgroups in a $6\times 6$ cyclic lattice. Five distinct partitions, each with
two classes, are plotted in Figure \ref{fig:hexagonal}.
 \begin{figure}[ht]
\centering
	\includegraphics[width=0.50\textwidth]{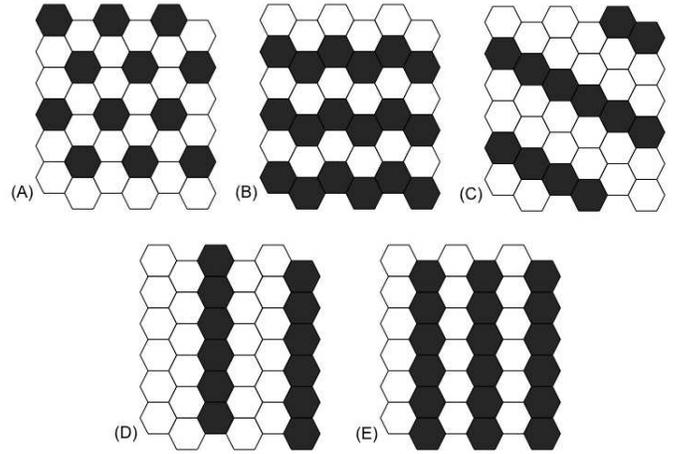}
	\caption{Five distinct partitions, each with two classes, in a $6\times 6$
	Two-Dimensional Cyclic Hexagonal Lattice obtained by symmetries on the
	contact graph.}\label{fig:hexagonal}
\end{figure}

For these partitions, we have the following scaled adjacency graph of the
auxiliary system:
\begin{equation*}
\overline{P}_A=\left[\begin{array}{cc} 0 & 1\\ \tfrac{1}{2} & \tfrac{1}{2}
\end{array} \right], \overline{P}_{B}=\left[\begin{array}{cc} \tfrac{1}{3} &
\tfrac{2}{3}\\ \tfrac{2}{3} & \tfrac{1}{3} \end{array} \right],
\overline{P}_{C}=\left[\begin{array}{cc} \tfrac{1}{3} &
\tfrac{2}{3}\\ \tfrac{1}{3} & \tfrac{2}{3} \end{array} \right],
\end{equation*}
and $\overline{P}_D=\overline{P}_C$, $\overline{P}_E=\overline{P}_B$. For each
matrix we have the following smallest eigenvalue $\lambda_A=-1/2$,
$\lambda_B=\lambda_E=-1/3$, and $\lambda_C=\lambda_D=0$. We thus conclude from
Theorem \ref{thm:main} that the steady-state represented by pattern $A$ in
Figure \ref{fig:hexagonal} exists when $|T'(u^*)|>2$, and patterns $B,E$ are
steady-states when $|T'(u^*)|>3$. Theorem \ref{thm:main} is inconclusive for
patterns $C,D$.\\

\textit{Example 4: Soccerball Pattern on a Buckminsterfullerene Graph}

The next example addresses a larger graph, with 32 cells. It is motivated by the
truncated icosahedron solid, also known as the \textit{Buckminsterfullerene}
\cite{godsil01}, formed by 12 regular pentagonal faces, and 20 regular hexagonal
faces, see Figure \ref{fig:soccerball}. In this case, we assume that each face is a vertex
and that two vertices are connected if the corresponding faces have a common
edge.

\begin{figure}[ht]
\centering
  	\includegraphics[width=0.35\textwidth]{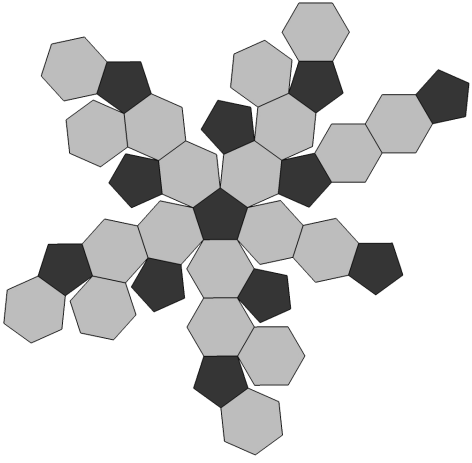}
  \caption{Cell network and soccer ball pattern for the
  Buckminsterfullerene graph.}
\label{fig:soccerball}
\end{figure}

The full automorphism group leads to two orbits, one
that consists of all the regular pentagon cells ($O_P$), and the second orbit
encloses all the regular hexagon cells ($O_H$). The quotient matrix associated
with the orbit partition is then
\begin{equation}
\overline{P}=\left[\begin{array}{cc}
0&1\\1/2&1/2
\end{array} \right].
\end{equation}
This matrix has eigenvalues $1$ and $-1/2$. Therefore, we conclude from Theorem
\ref{thm:main}, that a steady state as in Figure \ref{fig:soccerball} exists
when $|T'(u^*)|>2$.\\

%%%%%%%%% EXAMPLE NOT BIPARTITE PARTITION AND NOT SYMMETRIC PARTITION
\textit{Example 5: Nonbipartite nor Symmetric Equitable Partition}\\
As discussed above, both bipartitions and automorphism subgroups (symmetries)
lead to equitable partitions. However, these are not the only cases that lead
to equitable partitions. Consider the graph in Figure \ref{fig:equitablepart}
with partition $C_1=\{3,6\}$, and $C_2=\{1,2,4,5,7,8\}$. This partition is
equitable, but it does not result from an automorphism subgroup (for instance,
there is no automorphism exchanging vertices $1$ and $4$), and the graph
is also not bipartite (due to the odd length cycles).

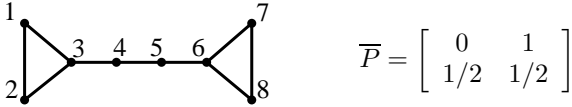
\begin{figure}[ht]
  \begin{minipage}{0.24\textwidth}
% Generated with LaTeXDraw 2.0.8
% Wed Jul 18 03:24:24 PDT 2012
% \usepackage[usenames,dvipsnames]{pstricks}
% \usepackage{epsfig}
% \usepackage{pst-grad} % For gradients
% \usepackage{pst-plot} % For axes
\centering
\scalebox{1} % Change this value to rescale the drawing.
{
\begin{pspicture}(0,-0.72)(3.56,0.74)
\psdots[dotsize=0.12](0.88,-0.14)
\psdots[dotsize=0.12](3.28,0.38)
\psdots[dotsize=0.12](2.68,-0.14)
\psdots[dotsize=0.12](3.28,-0.64)
\psdots[dotsize=0.12](1.48,-0.14)
\psdots[dotsize=0.12](2.08,-0.14)
\psdots[dotsize=0.12](0.26,0.38)
\psdots[dotsize=0.12](0.26,-0.64)
\psline[linewidth=0.04cm](0.88,-0.14)(1.48,-0.14)
\psline[linewidth=0.04cm](1.48,-0.14)(2.08,-0.14)
\psline[linewidth=0.04cm](2.08,-0.14)(2.68,-0.14)
\psline[linewidth=0.04cm](3.28,0.36)(3.28,-0.62)
\psline[linewidth=0.04cm](0.26,0.36)(0.26,-0.62)
\psline[linewidth=0.04cm](0.88,-0.14)(0.24,0.4)
\psline[linewidth=0.04cm](3.28,-0.64)(2.68,-0.14)
\psline[linewidth=0.04cm](0.86,-0.16)(0.26,-0.64)
\psline[linewidth=0.04cm](3.28,0.38)(2.68,-0.14)
\usefont{T1}{ptm}{m}{n}
\rput(0.07,0.545){1}
\usefont{T1}{ptm}{m}{n}
\rput(0.09,-0.495){2}
\usefont{T1}{ptm}{m}{n}
\rput(0.98,0.045){3}
\usefont{T1}{ptm}{m}{n}
\rput(1.53,0.065){4}
\usefont{T1}{ptm}{m}{n}
\rput(2.02,0.065){5}
\usefont{T1}{ptm}{m}{n}
\rput(2.57,0.065){6}
\usefont{T1}{ptm}{m}{n}
\rput(3.43,0.525){7}
\usefont{T1}{ptm}{m}{n}
\rput(3.43,-0.495){8}
\end{pspicture} 
}
  \end{minipage}
  \begin{minipage}{0.24\textwidth}
    \begin{equation*}
     \overline{P}=\left[\begin{array}{cc} 0 & 1\\ 1/2 & 1/2 \end{array}\right]
    \end{equation*}
    %\caption{Table caption}
  \end{minipage}
\caption{Example of an equitable partition that is neither bipartite nor a
symmetry ($C_1=\{3,6\}$, $C_2=\{1,2,4,5,7,8\}$).}
\label{fig:equitablepart}
\end{figure}

The quotient matrix is the same as in Example 4, we conclude that a two level
steady state pattern formed by cells $C_1$ and $C_2$ exists when $|T'(u^*)|>2$.\\

\section{Stability Analysis of Patterns}
In the previous sections we discussed the existence of nonhomogeneous
steady states for the cell network
\eqref{eq:celldynamics1}-\eqref{eq:interconnection}.
Determining stability for these steady states may become cumbersome for a large
network of cells. To simplify this task, we decompose
the system into an appropriate interconnection of lower order subsystems, and
make the interconnection structure explicit.\\

\subsection{Network Decomposition}
Note that an equitable partition defines the subspace 
$x_i=x_j$ for vertices $i$ and $j$ in the
same class. This subspace is invariant for the full system
\eqref{eq:celldynamics1}-\eqref{eq:interconnection}.
Therefore, the
steady states identified using an equitable partition of the contact graph lie
on the corresponding invariant subspace. For a partition of
dimension $r$, ($O_1,\ldots,O_r$), the reduced order dynamics on this subspace
consists of $r$ subsystems as defined in \eqref{eq:celldynamics1}, coupled by
$u=\overline{P}y$.\\
Let the steady state be defined by
$x_i=S(u_i)$, $i=1,\ldots,N$, where $u_i=z_j$ if $i\in O_j$ and
$[z_1,\ldots,z_r]$ is a solution of \eqref{eq:reducedequation}. The
linearization at this steady state has the form
\begin{equation}\label{eq:linearization}
\dot{x}=(A+BPC)x,
\end{equation}
where $A\in\R^{Nn\times Nn}$ is a block diagonal matrix where the $i$-th block
is equal to $A_j$ if $i$ is in orbit $O_j$, and $A_j\in\R^{n\times n}$,
is given by:
\begin{equation}\label{eq:linearization2}
A_j=\frac{\partial f(x,u)}{\partial x}|_{(x_j,u_j)}.
\end{equation}
Similarly, $B\in\R^{Nn\times N}$ and $C\in\R^{N\times Nn}$ are block diagonal
matrices as in $A$, with $B_j=\frac{\partial f(x,u)}{\partial
u}|_{(x_j,u_j)}$, and $C_j=\frac{\partial h(x)}{\partial x}|_{x_j}$.\\
To decompose \eqref{eq:linearization} into two subsystems, we select a
representative vertex $V_i$ for each class $O_i$. The set of $r$
representatives of each class defines the state of the subsystem on the
invariant subspace. To see this, let $Q$ be a matrix in $\R^{N\times r}$, where
$q_{ij}=1$ if cell $i$ is in class $j$, and $q_{ij}=0$ otherwise. Since the
partition is equitable, we conclude that
\begin{equation}
PQ=Q\overline{P}.
\end{equation}
Letting $T:=[Q\ R]$ and choosing $R$
to be a matrix in $\R^{N\times (N-r)}$ with
columns that, together with those of $Q$, form a basis for $\R^N$, we conclude
that there exist matrices $C$ and $M$ such that
\begin{equation}\label{eq:blockPmatrix}
P[\begin{array}{cc} Q & R\end{array}]=
[\begin{array}{cc} Q & R\end{array}]\left[
\begin{array}{cc}\overline{P} & C \\ 0 & M\end{array} 
\right]:=T\tilde{P}.
\end{equation}
The matrix $T$ is invertible and, thus, defines a similarity transformation from
matrix $P$ to $\tilde{P}$. Note that the upper left diagonal block of $\tilde{P}$ is the
matrix $\overline{P}$, which describes the reduced order subsystem
defined by the representative vertices.\\
Next, we study a particular choice of the matrix $R$ that gives a
meaningful variable representation to the transverse subspace dynamics.
Let the columns of $R$ be given by
standard vectors $e_i$, defined as $e_{ij}=\delta_{ij}$, $j=1,\ldots,N$; and
further select the columns of $R$ to be $e_i$, $i\in\{O_1\backslash
V_1,\ldots,O_r\backslash V_r\}$, in such a way that if $i\in O_p\backslash
V_p$, $j\in O_q\backslash V_q$, and $p<q$, then $e_i$ is in a column before
$e_j$, \textit{i.e.}, the column with non-zero entry $i$ is placed before the
column with entry $j$ if vertex $i$ is in a class with smaller index than the class of vertex $j$.\\
For this choice of $R$ we conclude from \cite[Section 5.3]{cvetkovic80} that
the change of variables $T\otimes I_n$, \textit{i.e.}, $\tilde{x}=(T^{-1}\otimes
I_n) x$, leads to the decomposition of the linearized dynamics into the
subsystem of the $r$ representative cells $V_i$ defining the invariant
subspace, and the transverse subspace that is defined by the state difference
between all the other cells in a class and their corresponding class representative:
\begin{equation}
\tilde{x}_i = \left\{\begin{array}{rl}
x_{V_i}, & i=1,\ldots,r\\
x_k-x_{V_j}, & i=r+1,\ldots,N
\end{array}\right. , \text{ where } k\in O_j\backslash V_j.
\end{equation}
Therefore, the linearized system is decomposed
into the representative subsystem $S_R$ and the transverse subsystem $S_D$,
\begin{equation}\label{eq:sysdecomp}
S_R=\{A_R,B_R,C_R\}\ \text{ and }\ S_D=\{A_D,B_D,C_D\}.
\end{equation}
To see this, note that
\begin{equation}
\dot{\tilde{x}}=(T^{-1}\otimes I_n) (A+BPC)(T\otimes I_n)\tilde{x} =
(\tilde{A}+\tilde{B}\tilde{P}\tilde{C})\tilde{x},
\end{equation}
where
\begin{eqnarray}
\tilde{A}&:=&T^{-1}\otimes I_n AT\otimes I_n\nonumber\\
&=&diag\{A_1,A_2,\ldots,A_r,A_1,\ldots\nonumber\\
&&\hspace{1.8cm}\ldots,A_1,A_2,\ldots,A_2,\ldots,A_r,\ldots,A_r\}\nonumber\\
&:=&diag\{A_R,
A_D\},
\end{eqnarray}
with $A_R\in\R^{rn\times rn}$, $A_D\in\R^{(N-r)n\times(N-r)n}$, and matrices
$\tilde{B}$ and $\tilde{C}$ are defined similarly to $\tilde{A}$. This means
that to prove the stability of \eqref{eq:linearization} we need to
guarantee the stability of matrices
\begin{equation}\label{eq:repsys}
A_R+B_R\overline{P}C_R\ \text{ and }\ A_D+B_DMC_D.
\end{equation}
Stability certification of the steady state pattern is thus simplified due to
the fact that $\overline{P}$ is typically much smaller than $P$, and $M$ can be further
decomposed by using a systematic approach based on the nested hierarchy of
automorphism groups of the contact graph $\mathcal{G}$.

\subsection{A Small-Gain Criterion for Stability}
In this section, we provide a small-gain type condition for the stability of the
steady state pattern around the solutions of \eqref{eq:patterneq}, based on
the solution $[z_1,\ldots,z_r]$ of \eqref{eq:reducedequation}, which is defined
by
\begin{equation}\label{eq:SteadyState} 
x_i=S(u_i)\text{, }\ i=1,\ldots,N\text{, }\ \text{where }u_i=z_j\ \text{ if }\ i\in O_j.
\end{equation}\\
Consider again the linearization introduced in \eqref{eq:linearization2}. This
describes a cell network with interconnection defined by $u=Py$ and
where each individual cell is given by the following linearized subsystem:
\begin{equation}\label{eq:linearizedcell}
H_i:\left\{
\begin{array}{rcl}
\dot{x}^i&=&A^ix^i+B^iu^i\\
y^i&=&C^ix^i
\end{array}
\right. ,
\end{equation}
where $A^i=A_j$, if $i\in O_j$ as in \eqref{eq:linearization2}, and similarly
for $B^i$ and $C^i$, see figure \ref{fig:linearinterconnection}. Assume that
each linearized subsystem $H_i$ is observable and that $A^i$ is Hurwitz.\\

\begin{figure}[ht]
\centering
  	\includegraphics[width=0.3\textwidth]{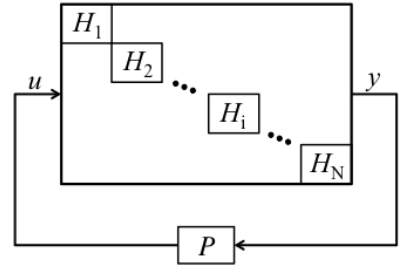}
  \caption{Linearized System Interconnection.}
\label{fig:linearinterconnection}
\end{figure}

Since subsystems in the same class have identical models, they have identical
$\mathcal{L}_2$-gains. Let $\gamma_i$ denote the $\mathcal{L}_2$-gain of each
subsystem in class $i$, and let $\Gamma$ be a diagonal matrix with entries
\begin{equation}\label{eq:gamma}
\{\Gamma\}_{jj}=\gamma_i\ \text{ for }\ j\in O_i.
\end{equation}
The following Proposition
provides a small-gain criterion for the stability of the cell network around the
steady state pattern.\\

\begin{theorem}\label{thm:stabilitytheorem}
Consider the network \eqref{eq:celldynamics1}-\eqref{eq:interconnection}. The
steady state pattern defined by \eqref{eq:SteadyState} is
locally asymptotically stable if
\begin{equation}\label{eq:smallgaincondition}
\rho(P\Gamma)<1,
\end{equation}
where $\Gamma$ is as in \eqref{eq:gamma}.
\end{theorem}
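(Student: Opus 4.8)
The plan is to invoke Lyapunov's indirect method and reduce local asymptotic stability of the pattern \eqref{eq:SteadyState} to Hurwitz stability of the linearization matrix $A+BPC$ appearing in \eqref{eq:linearization}. Writing the transfer function of each linearized cell \eqref{eq:linearizedcell} as $G_i(s)=C^i(sI-A^i)^{-1}B^i$ and collecting them into the block-diagonal $G(s)=\mathrm{diag}(G_1(s),\dots,G_N(s))$, I would first record the return-difference factorization
\[
\det(sI-A-BPC)=\det(sI-A)\,\det\!\left(I_N-P\,G(s)\right),
\]
which follows from the matrix-determinant identity $\det(I-UV)=\det(I-VU)$ with $U=(sI-A)^{-1}B$ and $V=PC$, using that $A$, $B$, $C$ are block diagonal so that $C(sI-A)^{-1}B=G(s)$. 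Because every $A^i$ is Hurwitz, $A$ is Hurwitz and $\det(sI-A)\neq 0$ on the closed right half-plane; hence any eigenvalue $s_0$ of $A+BPC$ with $\mathrm{Re}(s_0)\ge 0$ must satisfy $\det(I_N-PG(s_0))=0$.

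Suppose, for contradiction, that such an $s_0$ exists. Then $1$ is an eigenvalue of $PG(s_0)$, so there is $v\neq 0$ with $PG(s_0)v=v$. The key analytic input is the bound $|G_i(s_0)|\le\gamma_i$: since $A^i$ is Hurwitz, $G_i$ is analytic, strictly proper, and bounded on the closed right half-plane, so by the maximum modulus principle $\sup_{\mathrm{Re}(s)\ge 0}|G_i(s)|=\sup_\omega|G_i(j\omega)|=\|G_i\|_\infty$, which for a stable system equals its $\mathcal{L}_2$-gain $\gamma_i$ as used in \eqref{eq:gamma}. Reading $PG(s_0)v=v$ componentwise and using $P\ge 0$ then yields, for each $i$,
\[
|v_i|=\Big|\sum_j p_{ij}\,G_j(s_0)\,v_j\Big|\le\sum_j p_{ij}\,\gamma_j\,|v_j|=\big(P\Gamma\,|v|\big)_i,
\]
i.e. the entrywise inequality $|v|\le P\Gamma\,|v|$ with $|v|\ge 0$ and $|v|\neq 0$.

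Finally I would close the argument with Perron-Frobenius theory: for the nonnegative matrix $P\Gamma$, the Collatz-Wielandt characterization gives $\rho(P\Gamma)\ge\min_{i:\,|v_i|>0}(P\Gamma|v|)_i/|v_i|\ge 1$, contradicting the hypothesis \eqref{eq:smallgaincondition}. Hence $A+BPC$ has no eigenvalue in the closed right half-plane, it is Hurwitz, and the steady-state pattern is locally asymptotically stable.

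I expect the main obstacle to be the analytic step that upgrades the $\mathcal{L}_2$-gain $\gamma_i$ into the pointwise bound $|G_i(s_0)|\le\gamma_i$ at the (possibly off-axis) frequency $s_0$: it is tempting to bound $G_i$ only on the imaginary axis, whereas the eigenvalue $s_0$ may lie strictly inside the right half-plane, so the maximum modulus principle, together with the standing assumptions that $A^i$ is Hurwitz and $H_i$ is observable (ensuring $\gamma_i=\|G_i\|_\infty$ and that no unstable modes are hidden), is essential. A secondary point requiring care is the Perron-Frobenius step, which should be phrased through the Collatz-Wielandt formula so that it remains valid even when $P\Gamma$ is reducible.
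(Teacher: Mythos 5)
Your proof is correct, but it takes a genuinely different route from the paper's. The paper argues in the time domain with storage functions: the Bounded Real Lemma supplies quadratic $V_i$ with $\dot V_i\le \gamma_i^2 u^{iT}u^i-y^{iT}y^i$, a weighted sum $V=\sum_i d_iV_i$ together with $U=PY$ gives $\dot V\le Y^T\left((\Gamma P)^TD(\Gamma P)-D\right)Y$, the existence of the positive diagonal scaling $D$ is reduced to $I-\Gamma P$ being a nonsingular M-matrix (Araki), which is where $\rho(P\Gamma)<1$ enters, and convergence to the origin is obtained via LaSalle --- this is the step where the standing observability assumption is actually used. You instead work in the frequency domain: the return-difference factorization $\det(sI-A-BPC)=\det(sI-A)\det\left(I_N-PG(s)\right)$ (valid here because $A,B,C$ are block diagonal, so $C(sI-A)^{-1}B=G(s)$), the maximum-modulus bound $|G_i(s_0)|\le\gamma_i$ on the closed right half-plane (legitimate since each $G_i$ is scalar, rational, strictly proper, and stable, so $\sup_{\mathrm{Re}\,s\ge 0}|G_i(s)|=\|G_i\|_\infty=\gamma_i$), and the lower Collatz--Wielandt bound, which indeed holds without irreducibility, turning $|v|\le P\Gamma|v|$ into $\rho(P\Gamma)\ge 1$. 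Every step checks out. Comparing the two: your route dispenses with observability altogether --- hidden modes are automatically stable because $\det(sI-A)\neq 0$ on the closed right half-plane once each $A^i$ is Hurwitz, so your parenthetical that observability is ``essential'' overstates what you need --- and it yields the strictly stronger conclusion that $A+BPC$ is Hurwitz, hence local \emph{exponential} stability, while implicitly sidestepping the paper's search for the diagonal scaling $D$. The paper's route, in exchange, produces an explicit Lyapunov certificate $\sum_i d_iV_i$, makes the connection to diagonal stability and M-matrix theory explicit, and gives a dissipativity template that would extend beyond linearizations to nonlinear subsystems with finite $\mathcal{L}_2$ gain, whereas your argument is intrinsically tied to rational transfer functions of the linearized cells.
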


\begin{proof}
Since each linearized subsystem $H_i$, in \eqref{eq:linearizedcell}, has bounded
$\mathcal{L}_2$ gain, by the Bounded Real Lemma \cite{doyle89}, we conclude that
there exists a positive definite matrix $Q_i$ such that, for $V_i(z)=z^TQ_iz$,
we have $\dot{V}_i(x^i,u^i)\leq \gamma_i^2 u^{iT} u^i - y^{iT} y^i$. Let
$D=diag\{d_1,\ldots,d_n\}$, where $d_i$ is some positive constant, and let
$V(x)=\sum_i d_iV_i$. We then obtain,
\begin{eqnarray}
\dot{V}(x) &=& \sum_i d_i\dot{V}_i(x^i,u^i)\\
&\leq & U^T D \Gamma^2 U - Y^T D Y\\
&=& Y^T ((\Gamma P)^T D(\Gamma P)-D)Y ,
\end{eqnarray}
where $U=[(u^1)^T \ldots (u^N)^T]^T$, $Y=[(y^1)^T \ldots (y^N)^T]^T$, and the
second equality follows from the fact that $U=PY$. If $\dot{V}(x)$ is negative
semidefinite, we know from LaSalle's Invariance Principle \cite{khalil02}, and
for linear systems (\cite{rugh93}), that every trajectory converges to
the unobservable subspace of the system. Furthermore, under the
same assumption, and since each $H_i$ is observable, we conclude that each
trajectory must converge to the singleton $\{0\}$. Thus, the steady-state
\eqref{eq:SteadyState} is locally asymptotically stable, if there exists a
positive diagonal matrix $D$ such that $D - (\Gamma P)^T D(\Gamma P)$ is
positive definite. This is equivalent to the condition that $I-\Gamma P$ be a
M-matrix, see \cite[Theorem 2]{araki75}. To finalize the proof, note that the
spectra of $P\Gamma$ is the same as $\Gamma P$, and that the radial spectra
assumption implies that $I-\Gamma P$ is a nonsingular M-matrix, see
\cite[Definition 6.1.2]{berman94}.
\end{proof}

\vspace{5mm}
We now show that \eqref{eq:smallgaincondition} is equivalent to
$\rho(\overline{P}\overline{\Gamma})<1$, where
\begin{equation}\label{eq:gammabar}
\overline{\Gamma}=diag\{\gamma_1,\ldots,\gamma_r\}.
\end{equation}
This result simplifies the verification of this small gain stability condition
to the reduced system $S_R$ in \eqref{eq:sysdecomp} with interconnection matrix
$\overline{P}$.

\begin{lemma}\label{lemma:spectralradius}
Consider an equitable partition $\pi$ of $\mathcal{G}$. Then,
\begin{equation}
\rho(P\Gamma)<1 \Leftrightarrow \rho(\overline{P}\overline{\Gamma})<1,
\end{equation}
where $\overline{P}$ is as in \eqref{eq:overlineP}, and $\overline{\Gamma}$ is
as in \eqref{eq:gammabar}.
\end{lemma}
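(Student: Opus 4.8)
The plan is to prove the sharper statement $\rho(P\Gamma)=\rho(\overline{P}\,\overline{\Gamma})$, from which the claimed equivalence is immediate. The key structural fact is that $Q\in\R^{N\times r}$ intertwines the two products. Because $\Gamma$ takes the constant value $\gamma_i$ on each class $O_i$, an entrywise check (each row of $Q$ has a single unit entry, in the column of its own class) gives $\Gamma Q=Q\overline{\Gamma}$. Combining this with the equitability identity $PQ=Q\overline{P}$ yields
\begin{equation*}
(P\Gamma)Q=P(\Gamma Q)=PQ\,\overline{\Gamma}=Q\,\overline{P}\,\overline{\Gamma}.
\end{equation*}
Since $Q$ has full column rank $r$, it carries every right eigenvector $w$ of $\overline{P}\,\overline{\Gamma}$ to a nonzero right eigenvector $Qw$ of $P\Gamma$ with the same eigenvalue, so $\mathrm{spec}(\overline{P}\,\overline{\Gamma})\subseteq\mathrm{spec}(P\Gamma)$ and in particular $\rho(\overline{P}\,\overline{\Gamma})\le\rho(P\Gamma)$. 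This already settles the ``$\Rightarrow$'' direction.

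For the reverse inequality $\rho(P\Gamma)\le\rho(\overline{P}\,\overline{\Gamma})$ I would use that both matrices are nonnegative ($P,\overline{P}\ge0$ and every gain $\gamma_i\ge0$) together with the Collatz--Wielandt characterization of the spectral radius of a nonnegative matrix \cite{berman94}. For any $\varepsilon>0$ there is a strictly positive $w>0$ in $\R^r$ with $\overline{P}\,\overline{\Gamma}\,w\le(\rho(\overline{P}\,\overline{\Gamma})+\varepsilon)\,w$; such a $w$ may be taken as the Perron vector of the positive, hence irreducible, perturbation $\overline{P}\,\overline{\Gamma}+\varepsilon'\mathbf{1}\mathbf{1}^{T}$ with $\varepsilon'$ small, using continuity of the spectral radius. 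Lifting this vector, set $x:=Qw$; then $x>0$ because each row of $Q$ has exactly one unit entry and $w>0$. Since $Q\ge0$ preserves the coordinatewise inequality,
\begin{equation*}
(P\Gamma)x=Q\,\overline{P}\,\overline{\Gamma}\,w\le(\rho(\overline{P}\,\overline{\Gamma})+\varepsilon)\,Qw=(\rho(\overline{P}\,\overline{\Gamma})+\varepsilon)\,x .
\end{equation*}
The Collatz--Wielandt minimax bound applied to the nonnegative matrix $P\Gamma$ then forces $\rho(P\Gamma)\le\rho(\overline{P}\,\overline{\Gamma})+\varepsilon$, and letting $\varepsilon\to0$ gives $\rho(P\Gamma)\le\rho(\overline{P}\,\overline{\Gamma})$. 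The two inequalities together yield $\rho(P\Gamma)=\rho(\overline{P}\,\overline{\Gamma})$, and since both quantities equal the same number, the thresholds at $1$ coincide.

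The step I expect to be the main obstacle is this reverse inequality. The intertwining alone only gives the easy subset relation on spectra; what must be ruled out is the possibility that the Perron root of the large matrix $P\Gamma$ is carried by a ``transverse'' eigenvalue (one not inherited from the quotient block $\overline{P}$). Producing a strictly positive eigenvector-like certificate $x=Qw$ is exactly what excludes this, and it works only because $Q$ and both matrices are nonnegative; this is where the Perron--Frobenius structure is indispensable. A minor technical point, which is why I phrase the argument through the $\varepsilon$-perturbation rather than assuming a strictly positive Perron eigenvector of $\overline{P}\,\overline{\Gamma}$ at the outset, is to cover the reducible case (for instance when some $\gamma_i=0$), where $\overline{P}\,\overline{\Gamma}$ need not admit a strictly positive Perron vector.
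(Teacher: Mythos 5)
Your proof is correct, and it shares the backbone of the paper's own argument---proving the sharper equality $\rho(P\Gamma)=\rho(\overline{P}\,\overline{\Gamma})$ by lifting positive vectors from the quotient to the full graph via the indicator matrix $Q$---but the mechanics of the critical step are genuinely different. The paper takes the Perron eigenvector $\overline{v}\gg 0$ of $\overline{P}\,\overline{\Gamma}$ (asserted irreducible), lifts it to a positive eigenvector $v=Q\overline{v}$ of $P\Gamma$ via a claim it proves by pointing to the Lifting Proposition of \cite{boyd05}, and then invokes Perron--Frobenius a second time: since $P\Gamma$ is nonnegative and irreducible, its only positive eigenvector is the Perron vector, so the lifted eigenvalue must equal $\rho(P\Gamma)$. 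You instead derive the intertwining $(P\Gamma)Q=Q\,\overline{P}\,\overline{\Gamma}$ directly from $PQ=Q\overline{P}$ and $\Gamma Q=Q\overline{\Gamma}$---a clean, self-contained substitute for the cited lifting claim---and split the equality into two one-sided bounds: the spectral inclusion through injective $Q$ gives $\rho(\overline{P}\,\overline{\Gamma})\le\rho(P\Gamma)$, while a Collatz--Wielandt subinvariance certificate $(P\Gamma)Qw\le(\rho(\overline{P}\,\overline{\Gamma})+\varepsilon)Qw$ with $Qw\gg 0$ gives the reverse. What your route buys is independence from irreducibility: the paper's uniqueness-of-positive-eigenvector step requires $\mathcal{G}$ connected and all $\gamma_i>0$, whereas your $\varepsilon$-perturbation also covers degenerate cases such as $\gamma_i=0$, where $\overline{P}\,\overline{\Gamma}$ and $P\Gamma$ become reducible and the paper's argument would break. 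What the paper's route buys is brevity under its standing hypotheses---one application of Perron--Frobenius on each side, no limiting argument---at the price of leaning on an external lifting result and on irreducibility assumptions left implicit.
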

\begin{proof}
To prove this statement we only need to show that
\begin{equation}
\rho(\overline{P}\overline{\Gamma})=\rho(P\Gamma).
\end{equation}
First, note that $\overline{P}\overline{\Gamma}$ is a nonnegative irreducible
matrix, by the Perron-Frobenius Theorem \cite{berman94}, we know that
$r=\rho(\overline{P}\overline{\Gamma})>0$ is an eigenvalue of
$\overline{P}\overline{\Gamma}$ with corresponding eigenvector
$\overline{v}\gg 0$.\\
\textit{Claim: $r>0$ is also an eigenvalue of $P\Gamma$ with corresponding
eigenvector $v$ such that entries $v_i=\overline{v}_j$ if $i\in O_j$.}\\
According to this claim, we know that $v$ is a positive eigenvector. Therefore,
by citing again the Perron-Frobenius Theorem, and since $P\Gamma$ is also a
nonnegative irreducible matrix, we conclude that $v$ has to be the eigenvector
associated with eigenvalue $r=\rho(P\Gamma)$.\\
To prove the claim, note that matrix $\Gamma$ is positive diagonal with
repeated entries for vertices in the same class. Therefore, since the vertex
partition is equitable for the scaled adjacency graph $P$, then it is
also equitable when we consider a modified adjacency graph $P\Gamma$,
\textit{i.e.},
\begin{equation}\label{eq:overlinePGamma}
\sum_{v\in O_j}p_{uv}\gamma_{jj}=\overline{p}_{ij}\gamma_{jj}\quad
\forall u\in O_i,
\end{equation}
where $\overline{p}_{ij}$ is as defined in \eqref{eq:overlineP}. From this
observation we see that this claim is a generalization of the Lifting
Proposition in \cite{boyd05}, which holds not only for partitions obtained
through an automorphism subgroup but also for any equitable partition. The proof
of the claim follows similarly to the proof of the Lifting Proposition, with
matrices $P\Gamma$ and $\overline{P}\overline{\Gamma}$ as in
\eqref{eq:overlinePGamma}.
\end{proof}

\vspace{5mm}
Lemma \ref{lemma:spectralradius} leads to a simplification of the
stability condition \eqref{eq:smallgaincondition} in Theorem \ref{thm:stabilitytheorem}.
\begin{corollary}\label{thm:smallgaincondreduced}
Consider the network \eqref{eq:celldynamics1}-\eqref{eq:interconnection}. The
steady state pattern defined by \eqref{eq:SteadyState} is
locally asymptotically stable if
\begin{equation}\label{eq:smallgaincondreduced}
\rho(\overline{P}\overline{\Gamma})<1.
\end{equation}\hfill $\blacksquare$
\end{corollary}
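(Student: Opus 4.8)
The plan is to obtain this corollary by directly chaining the two preceding results, since no new analytic content is needed beyond Theorem \ref{thm:stabilitytheorem} and Lemma \ref{lemma:spectralradius}. First I would recall that Theorem \ref{thm:stabilitytheorem} already certifies local asymptotic stability of the steady state pattern \eqref{eq:SteadyState} under the full-network small-gain condition $\rho(P\Gamma)<1$. That theorem carries the entire stability argument---the storage-function construction from the Bounded Real Lemma, the reduction to an M-matrix condition on $I-\Gamma P$, and the observability-based convergence to the origin---so here it can be invoked as a black box, with no part of it revisited.

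The second step is to transfer the hypothesis to the reduced-order setting. Lemma \ref{lemma:spectralradius}, just established for an arbitrary equitable partition $\pi$, gives the equivalence
\begin{equation}
\rho(P\Gamma)<1 \;\Longleftrightarrow\; \rho(\overline{P}\,\overline{\Gamma})<1 ,
\end{equation}
with $\overline{P}$ as in \eqref{eq:overlineP} and $\overline{\Gamma}$ as in \eqref{eq:gammabar}. Replacing the full-order hypothesis of Theorem \ref{thm:stabilitytheorem} by its reduced-order equivalent then produces exactly the stated conclusion: whenever $\rho(\overline{P}\,\overline{\Gamma})<1$, the pattern \eqref{eq:SteadyState} is locally asymptotically stable.

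I do not expect any genuine obstacle at this final step, precisely because all the technical work has been absorbed into Lemma \ref{lemma:spectralradius}, whose proof establishes that the two spectral radii in fact coincide. The single point that warrants a line of care is confirming that $\overline{\Gamma}$ in \eqref{eq:gammabar} is the correct reduced-order counterpart of the block-patterned $\Gamma$ in \eqref{eq:gamma}, that is, that the per-class gains $\gamma_i$ are exactly the repeated diagonal entries of $\Gamma$, so that the equivalence in the lemma applies verbatim. Granting this identification, the corollary follows immediately.
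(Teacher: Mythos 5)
Your proposal is correct and matches the paper's own reasoning exactly: the corollary is stated immediately after the remark that Lemma \ref{lemma:spectralradius} simplifies condition \eqref{eq:smallgaincondition}, so the intended proof is precisely the chaining of Theorem \ref{thm:stabilitytheorem} with the spectral-radius equivalence of Lemma \ref{lemma:spectralradius}. Your added check that $\overline{\Gamma}$ in \eqref{eq:gammabar} collects exactly the per-class gains repeated along the diagonal of $\Gamma$ in \eqref{eq:gamma} is the right (and only) point of care.
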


\subsection{Special Case: Bipartite Graph}
Let us consider the special case of a bipartite graph, with a partition $\pi$
consisting of two classes, chosen so that no two vertices in the same set are
adjacent. As discussed before, in Example 1, the quotient matrix $\overline{P}$
is given by
\begin{equation}
\overline{P}=\left[\begin{array}{cc} 0 & 1\\ 1 & 0 \end{array} \right].
\end{equation}
Therefore, the spectra of $\overline{P}\overline{\Gamma}$ is $\pm
\sqrt{\gamma_1\gamma_2}$.
The next result follows trivially from Corollary
\ref{thm:smallgaincondreduced}.\\
\begin{corollary}
Assume that the contact graph $\mathcal{G}$ of the cell network
\eqref{eq:celldynamics1}-\eqref{eq:interconnection} is bipartite, and that there
exists a steady state $u\in R^N$ such that
$u_i=z_1$ if $i\in O_1$ and $u_i=z_2$ if $i\in O_2$, with $z_1\neq z_2\neq
z^*$, as in Corollary \ref{cor:mainBipartite}. Then, the steady state solution
is locally asymptotically stable if
\begin{equation}\label{eq:bipartitestability}
\gamma_1\gamma_2<1,
\end{equation}
where $\gamma_1$ and $\gamma_2$ are the $\mathcal{L}_2$ gains of the linearized
subsystems around $z_1$ and $z_2$, respectively. \hfill $\blacksquare$
\end{corollary}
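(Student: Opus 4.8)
The plan is to obtain this statement as an immediate specialization of the reduced small-gain criterion in Corollary \ref{thm:smallgaincondreduced}. That corollary already guarantees that the steady state pattern \eqref{eq:SteadyState} is locally asymptotically stable whenever $\rho(\overline{P}\,\overline{\Gamma})<1$, so the only work remaining is to evaluate this spectral radius for the two-class bipartite quotient matrix and to rewrite the resulting inequality as the scalar condition $\gamma_1\gamma_2<1$.

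Concretely, I would first recall from Example 1 that for the bipartite partition into $O_1$ and $O_2$ the quotient matrix is $\overline{P}=\left[\begin{array}{cc} 0 & 1\\ 1 & 0 \end{array}\right]$, while $\overline{\Gamma}=diag\{\gamma_1,\gamma_2\}$ by \eqref{eq:gammabar} (here $r=2$). Forming the product gives
\begin{equation*}
\overline{P}\,\overline{\Gamma}=\left[\begin{array}{cc} 0 & \gamma_2\\ \gamma_1 & 0 \end{array}\right],
\end{equation*}
whose characteristic polynomial is $\lambda^2-\gamma_1\gamma_2$. Since $\gamma_1,\gamma_2>0$, its eigenvalues $\pm\sqrt{\gamma_1\gamma_2}$ are real, so $\rho(\overline{P}\,\overline{\Gamma})=\sqrt{\gamma_1\gamma_2}$. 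Squaring (legitimate because both sides are nonnegative) shows that $\rho(\overline{P}\,\overline{\Gamma})<1$ holds if and only if $\gamma_1\gamma_2<1$, and invoking \eqref{eq:smallgaincondreduced} in Corollary \ref{thm:smallgaincondreduced} then delivers local asymptotic stability.

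I do not expect any real obstacle here: the argument is a direct two-by-two computation plugged into an already-established criterion, and indeed the text preceding the statement notes that the spectrum of $\overline{P}\,\overline{\Gamma}$ is $\pm\sqrt{\gamma_1\gamma_2}$. The only point deserving a sentence of care—and it is secured by the hypotheses and by the existence conclusion of Corollary \ref{cor:mainBipartite}—is that the subsystems linearized about the two distinct pattern levels $z_1$ and $z_2$ each possess a well-defined finite $\mathcal{L}_2$-gain $\gamma_1$ and $\gamma_2$; this is precisely what allows us to form $\overline{\Gamma}$ and apply the reduced small-gain condition.
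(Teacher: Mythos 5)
Your proposal is correct and follows essentially the same route as the paper, which likewise obtains this corollary directly from Corollary \ref{thm:smallgaincondreduced} by observing that the spectrum of $\overline{P}\,\overline{\Gamma}$ is $\pm\sqrt{\gamma_1\gamma_2}$, so that $\rho(\overline{P}\,\overline{\Gamma})<1$ is equivalent to $\gamma_1\gamma_2<1$. Your explicit $2\times 2$ computation and the remark about the finiteness of the $\mathcal{L}_2$-gains merely spell out what the paper treats as immediate.
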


\vspace{5mm}
In the particular case where the $\mathcal{L}_2$-gain is given by
the dc-gain,
\begin{equation}
\gamma_i=-C^i (A^i)^{-1}B^i=-T'(z_i),
\end{equation}
we see that the local asymptotic stability condition in
\eqref{eq:bipartitestability} reduces to
\begin{equation}\label{eq:monotonestabcond}
T'(z_1)T'(z_2)<1.
\end{equation}
The $\mathcal{L}_2$-gain is indeed equal to the dc-gain in the particular case
where each subsystem \eqref{eq:celldynamics1} is input-output monotone
\cite{angeli04b}, as assumed in \cite{arcak12b}. We have thus recovered Theorem
2 in \cite{arcak12b} which used \eqref{eq:monotonestabcond} to prove the
existence of stable checkerboard patterns.\\
Unlike the proof in \cite{arcak12b}, which relies heavily on monotonicity
properties, here we have only assumed that the $\mathcal{L}_2$-gain is
equal to the dc-gain.

\section{Conclusions}
In this paper we presented analytical results to predict
steady-state patterns for large-scale lateral
inhibition systems. We have shown that equitable partitions provide templates
for steady state pattern candidates, as they identify invariant subspaces where
the fate of cells in the same class is identical. We proved the existence of
steady state patterns by relying on the static input-output model of each cell and the
algebraic properties of the contact graph.
One
limitation in these results is the assumption that the reduced graph is
bipartite. Therefore, the generalization to a larger class of graph partitions,
that do not necessarily result in bipartite reduced graphs, needs to be
investigated. 
Further results will also focus on the case where the cell model is
multiple-input multiple-output, representing cell-to-cell inhibition signals
that depend on more than one species.\\

Finally, we have analyzed the stability of
steady state patterns by providing a decomposition into a representative
subsystem $S_R$ and a transverse subsystem $S_D$. We also provide a small-gain
stability type criterion, which relies only on the reduced order subsystem $S_R$
to guarantee stability of the steady state patterns.

\section*{Acknowledgment}
The authors would like to thank Katya Nepomnyaschchaya (University of
Washington) for generating Matlab functions that were used in Examples $2$, $3$,
and $4$ (for the analysis of graph symmetries, subgroups, unique orbit
partitions, and verification of conditions for the existence of pattern candidates).

\addtolength{\textheight}{-3cm}

\bibliographystyle{ieeetr}
\bibliography{researchbib}
\end{document}